\newtheorem{theorem}{Theorem}%[section]
\newtheorem{proposition}[theorem]{Proposition}
\newtheorem{corollary}[theorem]{Corollary}
\newenvironment{proof}
     {\medskip\noindent{\bf Proof:}\hspace{1mm}}
      {\hfill$\Box$\medskip}
\def\eps{\varepsilon}
\def\qed{\ifvmode\mbox{ }\else\unskip\fi\hskip 1em plus 10fill$\Box$}
\long\def\ignore#1{}
\begin{document}
\thispagestyle{empty}
\begin{titlepage}
\thispagestyle{empty}
\title{On the Richter-Thomassen Conjecture about\\ Pairwise Intersecting Closed Curves}

\author{J\'anos Pach\thanks{EPFL, Lausanne and R\'enyi Institute,
    Budapest. Supported by OTKA grant NN-102029 under EuroGIGA
    projects GraDR and ComPoSe, and by Swiss National
    Science Foundation Grants 200020-144531 and 20021-137574. Email: {\tt pach@cims.nyu.edu}}
  \and Natan Rubin\thanks{Department of Computer Science, Ben Gurion University of the Negev, Be\'{}er Sheva 84105, Israel. 
Email: {\tt rubinnat.ac@gmail.com}. Work on this paper was partly performed at Universit\'{e} Pierre \& Marie Curie and Universit\'{e} Paris Diderot, Institut de Math\'{e}matiques de Jussieu (UMR 7586 du CNRS), 4 Place Jussieu, 75252 Paris Cedex, France. N.R. was partly supported by the Fondation Sciences Math\'{e}matiques de Paris (FSMP) and by a public grant overseen by the French National Research Agency (ANR) as part of the "Investissements d\'{}Avenir" program (reference: ANR-10-LABX-0098).}
  \and G\'abor Tardos\thanks{R\'enyi Institute, Budapest. Supported by EPFL
    Lausanne, the Hungarian
    OTKA grant NN-102029 and an NSERC Discovery grant. Email: {\tt
      tardos@renyi.hu}}}
%\date{}
\thispagestyle{empty}
\maketitle
\begin{abstract}
\thispagestyle{empty}
A long standing conjecture of Richter and Thomassen
states that the total number of intersection points between any $n$ simple closed Jordan curves in the plane, so that any pair of them intersect and no three curves pass through the same point, is at least $(1-o(1))n^2$.

We confirm the above conjecture in several important cases, including the case (1) when all curves are convex, and (2) when the family of curves can be partitioned into two equal classes such that each curve from the first class is touching every curve from the second class. (Two curves are said to be touching if they have precisely one point in common, at which they do not properly cross.)

An important ingredient of our proofs is the following statement: Let $S$ be a family of the graphs of $n$ continuous real functions defined on $\mathbb{R}$, no three of which pass through the same point. If there are $nt$ pairs of touching curves in $S$, then the number of crossing points is $\Omega(nt\sqrt{\log t/\log\log t})$.
\end{abstract}
\thispagestyle{empty}
\maketitle
\end{titlepage}

\section{Introduction}
Studying combinatorial properties of arrangements of curves in the plane and surfaces in higher dimensions is one of the central themes of discrete and computational geometry, and has numerous applications in motion planning, ray shooting, computer graphics, pattern recognition, combinatorial optimization etc.~\cite{Ed87}, \cite{PaS09}, \cite{ShA95}. The analysis of the complexity of many geometric algorithms crucially depends on extremal results on arrangements. In the last forty years, powerful probabilistic, algebraic and algorithmic techniques have been developed to deal with such questions. The classical open problem of Erd\H os~\cite{Er46} on the maximum number of times the unit distance can occur among $n$ points in the plane can also be paraphrased as a problem on arrangements, in two different ways:
\begin{enumerate}
\item What is the maximum number of incidences between $n$ points and $n$ unit circles in the plane, where a point $p$ and a circle $c$ are called {\em incident} if $p$ belongs to $c$?
\item What is the maximum number of {\em touchings} among $n$ circles of radius $1/2$ in the plane? Two such circles are tangent to each other if and only if their centers are at unit distance.
\end{enumerate}

A seminal result in this area is the Szemer\'edi-Trotter
theorem~\cite{SzT83a}, \cite{SzT83b}, which states the number of incidences
between $n$ points and $m$ lines in the plane is $O(n^{2/3}m^{2/3}+m+n)$; for
many important generalizations and applications, see~\cite{BMP05},
\cite{SoT12}, \cite{TaV10}. Nearly 20 years ago, Sz\'ekely~\cite{Sz97} found
an elegant argument using crossing numbers of graphs, which shows that the
same result holds for {\em pseudo-segments}, that is, for systems of curves,
every pair of which intersect at most once. This observation opened the door
to generalizations to systems of more general curves: circles, {\em
  pseudo-circles}, that is, curves with two (or a bounded number of) pairwise
intersections. The first substantial step in this direction was taken by
Tamaki and Tokuyama~\cite{PseudoParabolas}. They defined a {\em lens} to be
the union of two arcs connecting the same pair of points along different
curves belonging to a family $S$, and proved that for {pseudo-circles} one can
cut the curves at a number of points proportional to the maximum number
$\nu(S)$ of non-overlapping lenses, so that the curves break into pieces, any
two of which intersect at most once. Then one can apply the
Szemer\'edi-Trotter bound on pseudo-segments to upper bound the number of
incidences between the curves and the points. An interesting instance of
pseudo-circles, whose study is motivated by robotic motion planing, is the
family of homothetic copies of a fixed convex set in the plane
\cite{KLPS86}. The line of research initiated by Tamaki and Tokuyama was
continued in~\cite{Chan1}, \cite{Chan2}, \cite{Chan3}, \cite{ArS02},
\cite{AgS05}, \cite{MaT06}. 
Note that any point of tangency (i.e., a {\it locally} non-transversal intersection) between the curves of $S$ can be viewed as a degenerate lense. 
Thus, the parameter $\nu(S)$ is at least as
large as the number of tangencies (``degenerate lenses'') between the curves in $S$, and in many cases it can bounded in terms of this quantity.

Other geometric problems that boil down to estimating the maximum number of tangencies between a set of curves in the plane are discussed in Agarwal {\it et al.}~\cite{PseudoCircles}. In particular, they showed that any family of $n$ pairwise intersecting pseudo-circles admits at most $O(n)$ tangencies. As a consequence of the spectacular success of techniques from algebraic geometry in this area, recently some of these problems have been revisited from an algebraic perspective~\cite{KaMS12}.

In this paper, we address an old conjecture of Richter and
Thomassen~\cite{RiT95}. To formulate this conjecture, we need to introduce
some terminology. Let $S$ be a collection of curves in the plane in {\em general position}, in the sense that no three curves pass through the same point and no two share infinitely many points.
Two curves $s_1,s_2\in S$ are said to {\em touch} each other if they have {\em precisely one} point in common, at which the curves do not properly cross.

\begin{figure}[htbp]
\begin{center}
\input{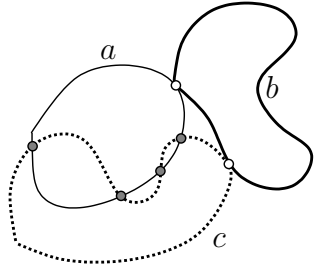}
\caption{\small In the depicted family of $3$ pairwise-intersecting curves, $b$ touches each of the remaining two curves $a$ and $c$, which cross one another.}
\label{Fig:LongDelaunay}
\end{center}
\end{figure}

The unique point that two touching curves have in common is called a {\em touching point} or, in short, a {\em touching}. All other points shared by two curves of the collection are called {\em crossing points} of $S$ or, simply, {\em crossings}, and the corresponding curves of $S$ are said to {\em cross} each other. The term {\em intersection} refers to both touchings and crossings. In what follows, we may assume that any point of tangency is necessarily a touching. Indeed, we can apply an arbitrary small perturbation to any pair $s_1,s_2\in S$ so as to remove all of their tangencies, with the possible exception of one last tangency, which then becomes a touching.

%Note that, according to this terminology, if two members of the collection have two points of tangency, then both %of them are called crossings.

\medskip
\noindent{\bf Richter--Thomassen Conjecture} \cite{RiT95} {\it The total number of
  intersection points between $n$ pairwise intersecting simple closed curves
  in the plane which are in general position is at least $(1-o(1))n^2$}.
\medskip

\noindent Richter and Thomassen established the weaker lower bound $(3/4-o(1))n^2$, which was later improved
by Mubayi~\cite{Mu02} to $(4/5-o(1))n^2$. Observe that if every pair of curves
intersect at least twice, then the number of intersection points is at least
$2{n\choose2}=(1-o(1))n^2$. The Richter--Thomassen conjecture states that we
cannot substantially decrease the number of intersection points by allowing
touching pairs. Most likely, the presence of many such pairs significantly
increases the total number of intersection points.

The conjecture was confirmed by Salazar \cite{Sa99} in the case when every pair of curves have at most a bounded number of points in common. However, the problem has remained open for general families of simple closed curves, with perhaps the most intriguing special case involving $n$ {\it convex curves}.

\smallskip

The aim of this paper is to prove the Richter--Thomassen conjecture in this special case. More generally, we settle the question for collections of closed curves that can be decomposed into a constant number of $x$-monotone pieces. An open curve is called {\em $x$-monotone} if no two points of it have the same $x$-coordinate; see Figure \ref{Fig:Decomposable} (left).

\begin{figure}[htbp]
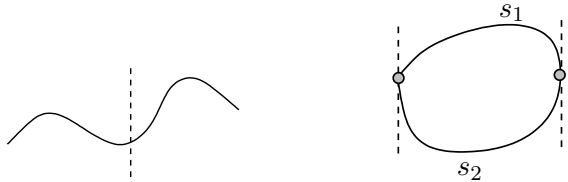

\begin{center}
\input{Monotone.pstex_t}\hspace{2cm}\input{Decomposable1.pstex_t}
\caption{\small Left: A curve is $x$-monotone if no two points have the same
  $x$-coordinate, so any vertical line can intersect it at most once. Right:
  Any convex curve can be decomposed into a pair of $x$-monotone
  curves $s_1$ and $s_2$.}
\label{Fig:Decomposable}
\end{center}
\end{figure}

%there exists a direction such that every line parallel to it intersects each curve in at most $2k$ points. We call such a collection {\em directionally $k$-convex}~\cite{HoH66}. Note that every collection of closed convex curves is directionally $1$-convex.

\begin{theorem}\label{RichterThomassen}
Let $k$ be a fixed positive integer. The total number of intersection points between $n$ pairwise intersecting closed curves in general position in the plane, each of which can be decomposed into at most $k$ $x$-monotone curves, is at least $(1-o_k(1))n^2$.
\end{theorem}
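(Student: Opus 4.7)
\medskip
\noindent My plan is to reduce Theorem~\ref{RichterThomassen} to the $x$-monotone lemma highlighted in the abstract via a simple two-case dichotomy on the number of touching pairs. First, I decompose each of the $n$ closed curves into at most $k$ $x$-monotone arcs, obtaining a family $\mathcal{A}$ of $N\le kn$ arcs in general position. Arcs coming from the same closed curve share only their decomposition endpoints, so --- after an arbitrarily small perturbation that moves these endpoints off all intersection points --- touchings (respectively crossings) between two closed curves $C$ and $C'$ correspond bijectively to touchings (respectively crossings) between pairs of arcs lying on $C$ and $C'$. Let $T$ be the number of touching pairs of closed curves and $X$ the number of crossings.

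Since every two closed curves intersect and a non-touching pair must cross an even number, hence at least two, of times, we get $X\ge 2\bigl(\binom{n}{2}-T\bigr)$, so the total number of intersections is at least $T+X\ge n^2-n-T$. Whenever $T=o(n^2)$, this already yields the target bound $(1-o_k(1))n^2$. It therefore suffices to dispose of the complementary regime $T\ge\gamma n^2$ for some fixed $\gamma>0$.

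In this regime I apply the $x$-monotone lemma to $\mathcal{A}$: its at least $T$ touching pairs can be written as $N\cdot s$ with $s=T/N\ge\gamma n/k$, and the lemma then furnishes
\[
\Omega\!\left(Ns\sqrt{\log s/\log\log s}\right)=\Omega\!\left(T\sqrt{\log n/\log\log n}\right)
\]
crossings among the arcs, each of which is a crossing of two closed curves. Since $T=\Omega(n^2)$, this count is $\omega(n^2)$, which comfortably beats the target. Combining the two regimes proves the theorem.

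The real mathematical work is hidden in the $x$-monotone lemma itself; the reduction above is routine. The one technical wrinkle is that the lemma concerns graphs of continuous functions on all of $\mathbb{R}$, whereas our arcs are graphs on bounded $x$-intervals; I would either invoke a version of the lemma proved directly for $x$-monotone arcs, or extend every arc to a function on $\mathbb{R}$ by attaching monotone tails that escape to $\pm\infty$ through regions separated from the bounding box of $\mathcal{A}$ and from each other, so that no new touchings or crossings appear among the originals and only an $O(n^2)$ overhead of spurious tail intersections arises (negligible compared to the $\omega(n^2)$ gain). The main obstacle one would face in a fully self-contained proof is thus establishing the lemma, which is combinatorially substantially more demanding than the dichotomy presented here.
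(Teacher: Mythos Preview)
Your approach is essentially the paper's: the same dichotomy on $|T|$, the same decomposition into at most $k$ $x$-monotone pieces, and the same appeal to the bi-infinite lemma (Corollary~\ref{biinf}) in the dense regime.

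The one genuine soft spot is your extension of bounded arcs to bi-infinite curves. A touching pair, by definition, shares \emph{exactly one} point; any spurious tail crossing between the two members of such a pair therefore destroys the touching rather than merely inflating the crossing count. So the ``$O(n^2)$ overhead of spurious tail intersections'' you dismiss as negligible on the output side is not harmless on the input side: if those intersections land on formerly touching pairs, the hypothesis of $Ns$ touchings can evaporate and the lemma no longer applies. The paper handles this with an idea your sketch omits: first randomly split the arcs into $S_1$ and $S_2$, retain only the $\ge \epsilon n^2/4$ touchings in which a curve of $S_1$ touches a curve of $S_2$ from below, and then send all $S_1$-tails to $-\infty$ and all $S_2$-tails to $+\infty$ (via steep rays). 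This guarantees that each retained touching pair remains a touching pair (the $S_1$ curve stays weakly below the $S_2$ curve everywhere), while every other pair gains at most two crossings. Without this bipartite orientation trick, your extension step does not go through as written; with it, your argument coincides with the paper's.
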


Given a finite collection of closed convex curves in the plane, by slightly changing the direction of the $x$-axis, if necessary, we can assume that each element can be split into two $x$-monotone pieces; see Figure \ref{Fig:Decomposable} (right). Applying Theorem~\ref{RichterThomassen} with $k=2$, we establish the conjecture for all families of closed convex curves (that is, boundaries of convex regions).

\begin{corollary}\label{convex}
The total number of intersection points between $n$ pairwise intersecting closed convex curves in general position in the plane is at least $(1-o(1))n^2$.
\end{corollary}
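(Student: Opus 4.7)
The plan is to derive Corollary \ref{convex} as an almost immediate consequence of Theorem~\ref{RichterThomassen} in the case $k=2$. The only real work is to verify that any finite collection of closed convex curves in general position can, after a generic choice of coordinate system, be viewed as a family each of whose members decomposes into exactly two $x$-monotone arcs. Once this is done, the corollary follows by direct invocation of the theorem.

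First I would choose the direction of the $x$-axis. Fix a closed convex curve $c$ bounding a convex region $K_c$. For each direction $\theta$, the projection of $K_c$ onto the line in direction $\theta$ is a closed interval, and the preimages of the two endpoints of this interval are either single points (the leftmost and rightmost points of $c$ with respect to $\theta$) or straight segments of $c$ parallel to the orthogonal direction. Since $c$ has at most countably many maximal straight segments, the set of ``bad'' directions $\theta$ for which either of these preimages fails to be a single point is countable. Taking the union over the finitely many curves in our collection still yields a countable set of bad directions, so we may pick a direction $\theta^*$ (and, by further arbitrarily small rotation, preserve the general-position assumption on the whole family) such that every curve has a well-defined leftmost point $\ell_c$ and a well-defined rightmost point $r_c$.

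Next, I would observe that deleting $\ell_c$ and $r_c$ from $c$ partitions it into two arcs. Each arc is $x$-monotone: any vertical line with $x$-coordinate strictly between $x(\ell_c)$ and $x(r_c)$ meets the boundary of the convex set $K_c$ in exactly two points, one on each arc, while vertical lines outside this strip miss $c$ altogether. Hence each closed convex curve in our collection splits into exactly $k=2$ $x$-monotone pieces in this coordinate system.

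Having established the decomposition, I would simply apply Theorem~\ref{RichterThomassen} with $k=2$ to our family, which gives the desired lower bound of $(1-o(1))n^2$ on the total number of intersection points. There is essentially no obstacle here beyond confirming the elementary geometric facts above; the entire content of the corollary is packaged inside the much more substantial Theorem~\ref{RichterThomassen}.
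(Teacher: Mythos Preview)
Your proposal is correct and follows exactly the approach sketched in the paper: choose a generic direction for the $x$-axis so that each convex curve has a unique leftmost and rightmost point, split into two $x$-monotone arcs, and invoke Theorem~\ref{RichterThomassen} with $k=2$. You have merely supplied the routine justification (countably many bad directions) that the paper leaves implicit.
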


Theorem~\ref{RichterThomassen} can be deduced from a general lower bound on the number of crossings between $n$ $x$-monotone curves in terms of the number of touchings between them. An $x$-monotone curve is called {\em bi-infinite} or {\em two-way infinite} if it is the graph of a continuous real function defined over the entire real line.

\begin{theorem}\label{bibi} Let $S_1$ and $S_2$ be two collections of bi-infinite $x$-monotone curves in general position. Suppose that $|S_1|=|S_2|=n$ and that the number of touching pairs $(s_1,s_2)$ with $s_1\in S_1$, $s_2\in S_2$ is $tn$, where $t$ is larger than an absolute constant $C$.

Then the number of crossing points between the elements of $S_1$ or the number of crossing points between the elements of $S_2$ is at least $\Omega\left(nt\sqrt{\log t/\log\log t}\right)$.
\end{theorem}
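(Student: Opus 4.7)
The plan is to reduce Theorem~\ref{bibi} to the single-family crossing-touching inequality stated in the abstract, which I shall call \emph{Lemma~A}: for any family of $N$ bi-infinite $x$-monotone graphs of continuous functions on $\mathbb{R}$ in general position with $N\tau$ touching pairs (for $\tau$ larger than an absolute constant), the total number of crossings among the curves is $\Omega(N\tau\sqrt{\log\tau/\log\log\tau})$.

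Applied to the combined family $S_1\cup S_2$, which has $N=2n$ bi-infinite $x$-monotone graphs and $nt$ touching pairs (all bipartite, since by hypothesis touchings live only between $S_1$ and $S_2$), Lemma~A with $N\tau=nt$, $\tau=t/2$ gives an immediate lower bound of $\Omega\!\left(nt\sqrt{\log t/\log\log t}\right)$ on the total crossing count
\[
X_1+X_2+X_{12},
\]
where $X_i$ is the number of crossings among the curves of $S_i$ and $X_{12}$ the number of bipartite $S_1$-$S_2$ crossings.

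What remains, and what forms the heart of the proof, is to show that the within-class count $X_1+X_2$ already accounts for a constant fraction of this lower bound, from which the claim $\max(X_1,X_2)=\Omega(nt\sqrt{\log t/\log\log t})$ follows. A natural first attempt is a random vertical perturbation: shift each $s_1\in S_1$ by an independent sign $\pm\varepsilon$ with $\varepsilon$ smaller than any relevant scale. Each bipartite touching $(s_1,s_2)$ then becomes, with probability $1/2$, a pair of transversal crossings, and otherwise disappears, while $X_1$ and $X_2$ (and all prior bipartite crossings) are preserved. This yields a bound relating $X_{12}+\tfrac12 nt$ to $X_1+X_2$ via Lemma~A reapplied after the perturbation. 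Without an additional structural handle on the ratio $X_{12}/(X_1+X_2)$, however, the perturbation alone does not separate within-class from between-class crossings, so a further idea is needed.

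The main obstacle is this separation step, and my best guess is that the cleanest route is to tailor the proof of Lemma~A to the bipartite setting directly, rather than invoking it as a black box. Lemma~A is presumably established by a recursive partitioning argument (either into vertical slabs or by random sampling of curves), where each recursive step contributes a factor of roughly $\sqrt{1/\log\log t}$ to the final crossing count via a Cauchy-Schwarz or crossing-lemma-style inequality, aggregating to $\sqrt{\log t/\log\log t}$ over $\Theta(\log t)$ levels. By running the same recursion with color-sensitive bookkeeping, where the crossings produced at each level are charged preferentially to pairs within the same class, one hopes to extract a direct lower bound on $X_1+X_2$ of the same asymptotic order, thereby establishing Theorem~\ref{bibi}. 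The bulk of the technical work is in this color-sensitive recursion, in particular in verifying that $x$-monotonicity forces every bipartite crossing produced at a given level of the recursion to be accompanied by a within-class crossing in one of the two halves of the partition.
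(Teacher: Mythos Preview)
Your proposal is not a proof but a plan, and the plan rests on two misconceptions.

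First, the logical dependency is reversed. What you call Lemma~A is precisely Corollary~\ref{biinf} in the paper, and it is \emph{derived from} Theorem~\ref{bibi}, not the other way around: one takes a single family, splits it randomly into $S_1$ and $S_2$, and applies Theorem~\ref{bibi}. So invoking Lemma~A as a black box to prove Theorem~\ref{bibi} is circular. You recognise yourself that the black-box application only bounds $X_1+X_2+X_{12}$, and you correctly note that the perturbation trick does not isolate $X_1+X_2$; at that point you have no argument left.

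Second, your speculation about the proof method is off. There is no recursive vertical partitioning, no Cauchy--Schwarz over $\Theta(\log t)$ levels. The paper proves Theorem~\ref{bibi} directly by a charging scheme: after arranging (by perturbation and possibly swapping $S_1,S_2$) that every surviving touching has a curve of $S_1$ touching a curve of $S_2$ from above, one builds a weighted bipartite multigraph between the set $T$ of touchings and the set $X$ of \emph{within-class} crossings only. For each power of two $k\le t/2$ there are three edge families $A_k$, $B_k$, $C_k$ (with weights $1/k$, $\alpha/k$, $\alpha^2/k$ respectively, for a parameter $\alpha$ to be optimised). The upper bound on the total weight through any $q\in X$ is $O(\log t+\alpha^2\log\alpha)$; the key geometric observation controlling the $C_k$ contribution is that the ``arcs'' attached to $C$-edges at a common crossing $q$ must pairwise intersect in points of $X$. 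For the lower bound, one shows that every touching $p$ (except $O(nk)$ bad ones per $k$) receives weight at least $\alpha$ from $A_k\cup B_k\cup C_k$, via a three-case analysis exploiting the one-sided touching property and the triangle-like region bounded by arcs of $a$, $b$, and a third curve $c_i$. Comparing the bounds and setting $\alpha=\sqrt{\log t/\log\log t}$ gives the theorem. The bipartite structure is used throughout to guarantee that the crossings being charged lie in $X_1\cup X_2$, so the separation you were looking for is built into the charging scheme from the start rather than recovered afterwards.
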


Let $S$ be a collection of $n$ bi-infinite $x$-monotone curves with $tn$ touching pairs. Partition $S$ randomly into two sets, $S_1$ and $S_2$, of size $\lfloor n/2\rfloor$ and $\lceil n/2\rceil$, respectively. The expected number of touchings between the curves in $S_1$ and the curves in $S_2$ will be more than $tn/2$. Fix a partition $S=S_1\cup S_2$ for which this number exceeds $tn/2$, and apply Theorem~\ref{bibi} to $S_1$ and $S_2$. We obtain the following.

\begin{corollary}\label{biinf} Let $S$ be a collection of $n$ bi-infinite $x$-monotone curves in general position with $tn$ touching pairs, where $t$ is bigger than an absolute constant. Then the number of crossings between the elements of $S$ is $\Omega\left(nt\sqrt{\log t/\log\log t}\right)$.
\end{corollary}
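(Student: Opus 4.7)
The proof is a straightforward reduction to Theorem \ref{bibi} by random bipartition, which I would organize as follows. First, partition the collection $S$ uniformly at random into two subclasses $S_1$ and $S_2$ of sizes $\lfloor n/2 \rfloor$ and $\lceil n/2 \rceil$, respectively. For any touching pair $\{a,b\}$ in $S$, the probability that $a$ and $b$ land in different classes is
\[
\frac{2 \lfloor n/2 \rfloor \lceil n/2 \rceil}{n(n-1)} \;>\; \frac{1}{2}.
\]
By linearity of expectation, the expected number of bipartite touching pairs (with one element in $S_1$ and the other in $S_2$) strictly exceeds $tn/2$, so at least one outcome of the random partition attains this lower bound.

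I would then fix such a partition and apply Theorem \ref{bibi} to $(S_1, S_2)$. A small technicality is that the theorem requires $|S_1|=|S_2|$, which fails when $n$ is odd. This is handled by removing a single curve from the larger class, thereby losing at most $n-1$ bipartite touchings and reducing to the common size $n' = \lfloor n/2 \rfloor$. The remaining number of bipartite touchings is still at least $tn/2 - (n-1) \geq t' n'$ for some $t' = \Omega(t)$, provided the absolute constant in the hypothesis $t > C$ is taken large enough to dominate both this loss and the constant required by Theorem \ref{bibi}. Note also that $S_1, S_2$ inherit general position from $S$. The conclusion of Theorem \ref{bibi} then yields $\Omega(n' t' \sqrt{\log t'/\log\log t'}) = \Omega(nt\sqrt{\log t/\log\log t})$ crossings either among the elements of $S_1$ or among the elements of $S_2$, and these are of course crossings of $S$ itself.

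I do not anticipate any genuine obstacle here: the substantive geometric content is entirely contained in Theorem \ref{bibi}, and the corollary amounts to a cosmetic derandomization. The only step that warrants any care is the choice of the absolute constant $C$, so that the constant-factor losses incurred both by the random partition (which is close to, but strictly better than, the worst-case $1/2$ bound) and by the parity adjustment can be safely absorbed into the hidden $\Omega$-constant of the final bound.
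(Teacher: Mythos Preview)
Your proposal is correct and follows essentially the same approach as the paper: random bipartition, linearity of expectation to find a partition with more than $tn/2$ bipartite touchings, then apply Theorem~\ref{bibi}. If anything, you are more careful than the paper about the parity adjustment needed to enforce $|S_1|=|S_2|$.
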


At the beginning of Section~\ref{second}, we will show that Corollary~\ref{biinf} implies the following statement for not necessarily bi-infinite curves.

\begin{corollary}\label{xmon} Let $S$ be a collection of $n$ $x$-monotone curves in general position, with at least $\epsilon n^2$ touching pairs. If $\epsilon>C\sqrt{\log\log n/\log n}$ for a suitable absolute constant $C$, then $S$ determines $\Omega\left(\epsilon n^2\sqrt{\log n/\log\log n}\right)$ crossing points.
\end{corollary}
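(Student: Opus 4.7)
The plan is to reduce Corollary~\ref{xmon} to Corollary~\ref{biinf} by extending every $x$-monotone curve to a bi-infinite one using horizontal rays. Concretely, for each curve $s\in S$ with left endpoint $(a_s,y_s^L)$ and right endpoint $(b_s,y_s^R)$, let $\tilde s$ be the bi-infinite $x$-monotone curve obtained from $s$ by appending the horizontal ray $\{(x,y_s^L):x<a_s\}$ on the left and $\{(x,y_s^R):x>b_s\}$ on the right. Set $\tilde S=\{\tilde s:s\in S\}$. A small perturbation of the endpoint heights ensures that the $2n$ new ray heights are all distinct and that no rational coincidences spoil the general-position assumption for $\tilde S$; this perturbation does not affect intersections happening strictly inside the bodies.

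Next I would compare the two intersection counts. Every intersection between two original curves $s,s'\in S$ occurs strictly inside their bodies, so it is still an intersection (of the same type, touching or crossing) between $\tilde s$ and $\tilde s'$. In particular, $\tilde S$ inherits at least $\epsilon n^2$ touching pairs, so Corollary~\ref{biinf} applies with parameter $t\geq\epsilon n$. The hypothesis $\epsilon>C\sqrt{\log\log n/\log n}$ makes $\epsilon n$ exceed any fixed absolute constant for $n$ large. Since $\log(\epsilon n)=\Theta(\log n)$ and $\log\log(\epsilon n)=\Theta(\log\log n)$ in this regime, Corollary~\ref{biinf} yields
\[
\#\{\text{crossings in }\tilde S\}\;=\;\Omega\!\left(n\cdot\epsilon n\cdot\sqrt{\log(\epsilon n)/\log\log(\epsilon n)}\right)\;=\;\Omega\!\left(\epsilon n^2\sqrt{\log n/\log\log n}\right).
\]

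To transport this bound back to $S$, I need to control the crossings of $\tilde S$ that are not crossings of $S$, i.e., those involving at least one of the attached rays. Two horizontal rays at distinct heights never meet, so no intersection is created between pairs of extensions. For a fixed pair $s,s'\in S$, each of the (at most) four rays attached to the pair is a horizontal half-line, and an $x$-monotone curve meets any horizontal line in at most one point; hence at most a constant number of spurious intersections are introduced per pair, for a total of $O(n^2)$ extra crossings in $\tilde S\setminus S$. Subtracting,
\[
\#\{\text{crossings in }S\}\;\geq\;\Omega\!\left(\epsilon n^2\sqrt{\log n/\log\log n}\right)-O(n^2).
\]
The hypothesis $\epsilon>C\sqrt{\log\log n/\log n}$ gives $\epsilon n^2\sqrt{\log n/\log\log n}\geq C n^2$, so choosing the absolute constant $C$ large enough absorbs the $O(n^2)$ error into the main term and delivers the claim.

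The only mildly delicate step is the general-position argument for $\tilde S$: we must make sure that attaching horizontal rays does not inadvertently produce tangencies (a ray grazing another body at a local extremum) or triple intersections. This is routine and can be handled by the perturbation above, or, alternatively, by replacing each horizontal ray with a ray of a generic tiny slope chosen distinctly for each endpoint; the per-pair spurious-intersection bound then becomes a slightly larger but still absolute constant, which does not affect the $O(n^2)$ overall estimate.
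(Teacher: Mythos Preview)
Your reduction to Corollary~\ref{biinf} is the right idea and matches the paper's strategy, but the extension step contains a genuine error. You assert that ``an $x$-monotone curve meets any horizontal line in at most one point.'' This is false: $x$-monotonicity means every \emph{vertical} line meets the curve at most once (the curve is the graph of a function), but a horizontal line can meet it arbitrarily many times (think of $y=\sin x$). Consequently a horizontal ray attached to one curve can cross the body of another curve an unbounded number of times, and your $O(1)$-per-pair bound on spurious intersections collapses. Replacing the horizontal rays by rays of ``generic tiny slope'' does not help for the same reason. This also undermines your claim that $\tilde S$ inherits all $\epsilon n^2$ touchings: a touching pair must have \emph{exactly one} common point, so if the attached rays create even one extra intersection with the partner's body, that pair ceases to be a touching pair in $\tilde S$.

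The paper repairs both issues as follows. First it partitions $S$ into $S_1,S_2$ so that a constant fraction of the touchings have a curve of $S_1$ touching a curve of $S_2$ from below, perturbs away the remaining touchings, and replaces each curve by an $x$-monotone polygonal path. Then it extends with \emph{very steep} rays (slope larger than every segment slope), oriented downward for $S_1$ and upward for $S_2$. Steepness guarantees each ray crosses any polygonal body at most once (the difference with a polygonal curve of smaller slopes is strictly monotone), giving $O(1)$ new crossings per pair; the choice of orientation guarantees that for each surviving touching pair the two extended curves separate rather than meet again, so the touching is preserved. With these two ingredients in place, the rest of your argument (apply Corollary~\ref{biinf} with $t=\Theta(\epsilon n)$ and absorb the $O(n^2)$ error using the lower bound on $\epsilon$) goes through exactly as you wrote.
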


Corollary \ref{xmon} easily extends to the following more general result, which immediately implies Theorem \ref{RichterThomassen}.

\begin{theorem}\label{xmonk} Let $S$ be a collection of $n$ closed curves in general position and with at least $\epsilon n^2$ touching pairs, so that each curve in $S$ can be decomposed into $k$ $x$-monotone curves. Suppose that $\epsilon>C_k\sqrt{\log\log n/\log n}$ for a suitable constant $C_k>0$ which may depend on $k$. Then $S$ determines $\Omega_k\left(\epsilon n^2\sqrt{\log n/\log\log n}\right)$ crossing points.
\end{theorem}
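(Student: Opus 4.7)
The plan is to reduce Theorem~\ref{xmonk} to Corollary~\ref{xmon} by splitting every closed curve of $S$ into its $x$-monotone components. Fix, for each $s \in S$, a decomposition $s = s^{(1)} \cup \cdots \cup s^{(k_s)}$ into $k_s \le k$ $x$-monotone arcs, chosen so that the decomposition endpoints avoid the (finitely many) intersection points determined by $S$. Let $S'$ be the family of all these arcs, so $n' := |S'| \le kn$.

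First, I would perform an arbitrarily small perturbation in a neighborhood of each decomposition endpoint, slightly shortening the arcs so that arcs coming from the same closed curve become pairwise disjoint. Because every $s \in S$ is a simple closed curve, its $x$-monotone pieces meet each other only at the chosen decomposition endpoints, and the perturbation separates these endpoints without introducing any new intersections. Consequently: (i) $S'$ is in general position; (ii) every touching of two curves in $S$ persists, as a touching, between the unique pieces that contain it in $S'$; (iii) every crossing of two curves in $S$ persists, as a crossing, in $S'$; and (iv) no additional crossings or touchings appear in $S'$ beyond those inherited from $S$.

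Next, I would apply Corollary~\ref{xmon} to $S'$. Setting $\epsilon' := \epsilon n^2 / n'^2 \ge \epsilon / k^2$, the number of touching pairs in $S'$ is at least $\epsilon n^2 = \epsilon' n'^2$. Since $k$ is fixed and $n \le n' \le kn$, one has $\log\log n'/\log n' = (1+o(1))\, \log\log n / \log n$, and hence the hypothesis $\epsilon > C_k \sqrt{\log\log n/\log n}$ implies $\epsilon' > C \sqrt{\log\log n'/\log n'}$ (where $C$ is the absolute constant of Corollary~\ref{xmon}), provided $C_k$ is chosen sufficiently large as a function of $k$ (e.g.\ $C_k = 2Ck^2$, absorbing the $1+o(1)$ factor into the large-$n$ regime). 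The corollary then yields at least $\Omega(\epsilon' n'^2 \sqrt{\log n'/\log\log n'}) = \Omega_k(\epsilon n^2 \sqrt{\log n/\log\log n})$ crossings in $S'$, and by (iii)--(iv) this lower-bounds the number of crossings in $S$.

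The main obstacle, though not deep, is verifying the perturbation argument behind (i)--(iv): one must confirm that for sufficiently small perturbations confined to neighborhoods of the decomposition endpoints no touching is destroyed or converted into a crossing, no crossing is lost, and no spurious new intersections arise among pieces of the same closed curve. This is where the \emph{simplicity} of the curves in $S$ is used; everything else is a routine reindexing of parameters from $(n,\epsilon)$ to $(n',\epsilon')$.
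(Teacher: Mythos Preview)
Your approach is essentially identical to the paper's: decompose each closed curve into its $x$-monotone pieces, obtain a family $S'$ of at most $kn$ arcs, and apply Corollary~\ref{xmon} with the rescaled parameters $(n',\epsilon')$. The reindexing and the verification that crossings in $S'$ come from crossings in $S$ are carried out just as in the paper.

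There is one small point where your argument is not quite justified. You assert that the decomposition endpoints can be ``chosen so that the decomposition endpoints avoid the (finitely many) intersection points determined by $S$.'' But any decomposition of a curve into $x$-monotone arcs is \emph{forced} to cut at every local $x$-extremum of that curve; you cannot move those cutpoints, only add extra ones. Nothing in the hypothesis ``general position'' (no triple points, no pair sharing infinitely many points) prevents a touching from sitting exactly at such an extremum. The paper sidesteps this by a cruder count: there are at most $kn$ cutpoints in total, so at most $kn$ touchings can be lost, and since $\epsilon n^2 \gg kn$ under the hypothesis $\epsilon > C_k\sqrt{\log\log n/\log n}$, the surviving family $S'$ still has $\epsilon' n'^2$ touchings with $\epsilon' = \Omega_k(\epsilon)$. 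Replacing your first paragraph with this observation (or, alternatively, arguing that a generic small rotation of the axes moves the extrema off the intersection points without changing $k$) closes the gap; everything else goes through unchanged.
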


It is instructive to compare Theorems \ref{RichterThomassen} and \ref{xmonk}
in the case $S$ consists of pairwise-intersecting curves.
%in the particular case where (i) the curves of $S$ are pairwise-intersecting, and (ii) at least $\Omega(n^2)$ %pairs in $S$ touch.
Note that the statement of Theorem \ref{RichterThomassen} can still hold even if both the number of crossings and the number of touchings determined by $S$ are quadratic in $n$ (for arbitrary large $n$). In contrast,
Theorem \ref{xmonk} guarantees that, in the above setup, the overall number of
crossing points must asymptotically {\it exceed} the number of touching
pairs. In particular, given a sufficiently large number $n$ of curves, each
decomposable into $k$ $x$-monotone curves, and
$\Omega(n^2)$ touchings among them, then some pair of curves must cross {\it
super-constantly} many times. For this conclusion, we can trade the assumption
of subdivisibility to a bounded number of $x$-monotone curves to a different
assumption: that each pair of curves intersect, see remark after Theorem~\ref{bipartite}.

\medskip
We prove Theorem~\ref{bibi}, which is the main technical result of this paper,
in Section \ref{third}. The proof is based on a double counting argument. We define a ``charging scheme'', that is, a weighted graph $G$ whose vertices correspond to the touchings and crossings between the curves in $S_1\cup S_2$. Theorem~\ref{bibi} follows by comparing our upper and lower bounds on the total weight of the edges.

\smallskip

Consider the setup of Theorem~\ref{bibi} in the special case when for every
pair $s_1\in S_1,s_2\in S_2$ the curve $s_1$ lies entirely below $s_2$ (with a
possible tangency). Fox {\em et al.}~\cite{FFPP10} established a tight bound
for this case. Namely, they proved that the number of crossing points between
the elements of $S_1$ or the number of crossing points between the elements of
$S_2$ is at least $\Omega(tn\log(tn))$, where, as above, $tn$ is the overall
number of tangent pairs $s_1\in S_1$ and $s_2\in S_2$. In particular, if every
element of $S_1$ touches all elements of $S_2$, we have $t=n$, so that
this bound becomes $\Omega(n^2\log n)$. In Theorem~\ref{bibi}, we prove a slightly weaker bound, but we drop the condition on the bipartite structure of the family of curves.

In Section~\ref{fifth}, we insist on a complete bipartite pattern
of tangencies, but we drop the condition of $x$-monotonicity.  We prove the following.
% but here we do not insist on $x$-monotone curves.

\begin{theorem}\label{bipartite}
Let $S_1$ and $S_2$ be collections of closed curves in general position. If $|S_1|=|S_2|=n$, any two elements of $S_1\cup S_2$ intersect and every element of $S_1$ touches all elements of $S_2$,
then the number of crossing points between the elements of $S_1$ or the number of crossing points between the elements of $S_2$ is $\Omega\left(n^2\sqrt{\log n/\log\log n}\right)$.
\end{theorem}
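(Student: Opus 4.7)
The plan is to reduce Theorem~\ref{bipartite} to Theorem~\ref{bibi}. The point is that with $|S_1|=|S_2|=n$ and a complete bipartite touching pattern the number of touching pairs is $n^2=tn$ with $t=n$, so Theorem~\ref{bibi} applied to a bi-infinite $x$-monotone version of the instance would yield exactly $\Omega(n^2\sqrt{\log n/\log\log n})$, matching the target bound. The whole difficulty is that $S_1\cup S_2$ consists of closed curves rather than bi-infinite $x$-monotone ones, so we must transform the instance into the $x$-monotone setting while preserving the touchings and without inflating the families too much.

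I would proceed as follows. First, rotate coordinates by a generic angle so that no common tangent at any of the $n^2$ touchings is vertical and the $x$-extrema of the curves are in general position. Under such a choice, each closed curve $s$ decomposes at its $x$-extrema into $x$-monotone arcs, and every touching lies strictly in the interior of an arc of $s_i^{(1)}$ and of $s_j^{(2)}$. Next, extend every bounded arc to a bi-infinite $x$-monotone curve by attaching two thin tails running off to $x=\pm\infty$ at generic $y$-coordinates, chosen so that tails of distinct arcs stay separated outside a large bounding box and introduce no new touchings and only negligibly many new crossings. Finally, apply Theorem~\ref{bibi} to the resulting bi-infinite $x$-monotone families $\tilde S_1,\tilde S_2$, and translate the conclusion back to a bound on crossings inside $S_1$ or inside $S_2$.

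The main obstacle is controlling the total number $N_k=\sum_{s\in S_k}m_s$ of arcs, where $m_s$ is the number of $x$-monotone pieces of $s$; if $N_k$ grows too fast, the parameter $t=n^2/N_k$ in the application of Theorem~\ref{bibi} shrinks and the bound weakens. I would handle this by first classifying each of the $n^2$ touchings by local type (internally nested with $s_i^{(1)}$ inside, internally nested with $s_j^{(2)}$ inside, or externally tangent) and, via pigeonhole, passing to a dominant type carrying $\Omega(n^2)$ touchings. The uniform type, together with the pairwise intersection property and the uniqueness of each touching, imposes strong restrictions on how a curve may oscillate in the chosen direction without being forced to meet one of its many touching partners a second time, which should keep $N_k$ small enough that Theorem~\ref{bibi} still delivers the $\sqrt{\log n/\log\log n}$ factor. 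If the direct bound on $m_s$ turns out to be too delicate, a fallback is to adapt the charging scheme behind the proof of Theorem~\ref{bibi} directly to closed curves, using the nesting order coming from the pigeonholed touching type in place of the left-to-right order from $x$-monotonicity.
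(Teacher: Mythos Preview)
Your primary plan---reducing to Theorem~\ref{bibi} by cutting the closed curves into $x$-monotone arcs---has a genuine gap. The number $m_s$ of $x$-monotone pieces of a general closed curve is not bounded by any function of $n$, and nothing in the hypotheses of Theorem~\ref{bipartite} controls it. A curve $a\in S_1$ may wiggle back and forth arbitrarily many times in a region that no other curve enters; this produces many $x$-extrema without forcing any additional intersection with a touching partner. Your pigeonhole on touching types and the assertion that ``the uniform type \dots\ imposes strong restrictions on how a curve may oscillate'' do not substantiate a bound on $N_k$: the $n$ touching partners of $a$ need only meet $a$ once each, and they can all do so along a single short arc of $a$, leaving the rest of $a$ free to be as convoluted as one likes. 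With $N_k$ unbounded, the parameter $t=n^2/\max(N_1,N_2)$ in Theorem~\ref{bibi} collapses and the bound is lost. This is exactly why the paper treats Theorem~\ref{bipartite} separately from Theorems~\ref{RichterThomassen} and~\ref{xmonk}, which assume each curve decomposes into a \emph{bounded} number $k$ of $x$-monotone pieces.

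Your fallback---adapting the charging scheme of Theorem~\ref{bibi} directly to closed curves---is the correct route, and it is what the paper does. The substitute for the left-to-right order is not a nesting order from touching types, however. The paper observes that every curve in $S_1\cup S_2$ is touched from one side only (a curve touching a closed curve from one side cannot intersect a curve touching it from the other side, contradicting pairwise intersection), and then orients each curve so that all its touching partners lie on its left. This orientation gives a well-defined ``arc of $a$ from $p$ to $q$'' for any $p,q\in a$, and the charging scheme is rebuilt on that notion. The edge classes are more elaborate than in Theorem~\ref{bibi} (five families $A_k,A'_k,A''_k,B_k,C_k$ with carefully tuned weights), and the lower-bound analysis at a fixed touching $p$ uses a triangular region bounded by arcs of $a$, $b$, and a third curve $c_i$, together with the complete bipartite touching structure, to drive a four-case argument. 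Your one-sentence fallback points in the right direction but substantially underestimates the work involved.
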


Note that this theorem also implies the statement formulated in the paragraph
after
Theorem~\ref{xmonk}. Namely, if $S$ is a collection of $n$ closed curves in
general position such that all pairs intersect and $\epsilon n^2$ pairs touch,
then some pair intersects super-constantly many times. Indeed, by the K\H
ov\'ari--S\'os--Tur\'an theorem, we find disjoint subsets $S_1$ and $S_2$ of $S$
with $|S_1|=|S_2|=\Omega(\log n)$ such that every curve in $S_1$ touches all
curves in $S_2$. Then, by Theorem~\ref{bipartite}, the curves in $S_1$ or $S_2$
have many crossings and thus there are two curves that intersect
$\Omega(\sqrt{\log\log n/\log\log\log n})$ times.

\section{Proofs of Corollary~\ref{xmon}, Theorems \ref{RichterThomassen} and \ref{xmonk} using Theorem~\ref{bibi}}\label{second}

We have seen in the Introduction that Corollary~\ref{biinf} is a direct
consequence of Theorem~\ref{bibi}. In this section, we first deduce
Corollary~\ref{xmon} from Corollary~\ref{biinf}. Then we show how
Theorem~\ref{RichterThomassen} and \ref{xmonk} follows from
Corollary~\ref{xmon}.

\medskip

\noindent{\bf Proof of Corollary~\ref{xmon}} (using Corollary~\ref{biinf}). Partition $S$ into two parts, $S_1$ and $S_2$, such that at least $\epsilon n^2/4$ touchings are {\em good} in the sense that they are formed by a curve from $S_1$ touching a curve from $S_2$ from below. This is possible, as the expected number of good touchings for a uniform random partition of $S$ into two parts is $\epsilon n^2/4$.

Perturb the curves in $S$ a little bit so that (1) they become $x$-monotone
polygonal paths consisting of finitely many straight-line segments, (2) all
crossing points and good touching points remain unchanged, and (3) all other
touchings are eliminated. Extend the resulting curves to bi-infinite polygonal
paths, as follows (see Figure \ref{Fig:Steep}). Let $z$ be a sufficiently
large positive number. Extend each curve in $S_1$ beyond its left endpoint and
each curve in $S_2$ beyond its right endpoint by a ray of slope $z$. Extend
each curve in $S_1$ beyond its right endpoint and each curve in $S_2$ beyond
its left endpoint by a ray of slope $-z$. By choosing $z$ large enough, we can
make sure that this last transformation preserves all good touching pairs and
creates only at most two new crossings for any pair of curves.\footnote{For
  instance, choosing $z$ larger than the absolute slopes of all straight-line
  segments that constitute the curves of $S$ will do.}
This means that we kept at least $\epsilon n^2/4$ touchings and created fewer than $n^2$ new crossing points.

%\natan{Here is how the polygonal breakpoints can be obtained: we draw a vertical line $\ell_p^\bot$ through each crossing or touching $p$ and add the intersections of $\ell_p^\bot$ with all the curves.}

\begin{figure}[htbp]
\begin{center}
\input{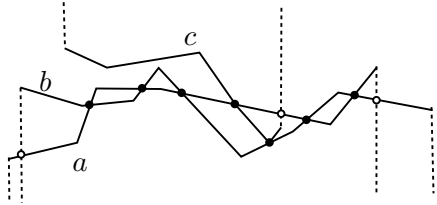}
\caption{\small Left: We extend each (polygonal) curve of $S_1\cup S_2$ to a bi-infinite polygonal path by sufficiently steep terminal rays.
(Among the three depicted curves, $a$ and $b$ belong to $S_1$, whereas $c$ belongs to $S_2$.) The extension introduces at most two additional crossings for any pair of curves.}
\label{Fig:Steep}
\end{center}
\end{figure}

In view of the condition on $\epsilon$, we can apply Corollary~\ref{biinf} to the modified bi-infinite curves with $t=\epsilon n/4$ to conclude that the number of crossing points is at least
$\Omega\left(nt\sqrt{\frac{\log t}{\log\log t}}\right)=\Omega\left(\frac{\epsilon n^2}{4}\sqrt{\frac{\log(\epsilon n/4)}{\log\log(\epsilon n/4)}}\right)=\Omega\left(\epsilon n^2\sqrt{\frac{\log n}{\log\log n}}\right).$
The lower bound on $\epsilon$ guarantees that over half of these points were already present in the original collection. \qed

\medskip
\noindent
{\bf Proof of Theorem~\ref{xmonk}} (using Corollary~\ref{xmon}).
Let $S$ be a collection of $n$ closed curves with at least $\eps n^2$ touching pairs as in Theorem \ref{xmonk}, so that each curve in $S$ can be decomposed into $k$ $x$-monotone curves. Clearly, dividing
the closed curves of $S$ into $x$-monotone pieces can introduce altogether at most
$kn$ cutpoints. The resulting collection of curves, $S'$, has still at least
$|T|-kn$ touching points, after losing all touchings that coincide with locally $x$-extremal
cutpoints of the curves. Assuming $\epsilon>C_k\sqrt{\log\log n/\log n}$ for a suitable constant $C_k$ guarantees that the new collection $S'$ satisfies the hypothesis of Corollary \ref{xmon} with $|S'|=n'=O_k(n)$ and $\eps'=\Omega_k(\eps)$.\qed

\medskip
\noindent
{\bf Proof of Theorem~\ref{RichterThomassen}} (using Theorem~\ref{xmonk}).
Two intersecting closed curves have at least two points in common, unless they
touch each other. Thus, the total number of intersection points between the
$n$ curves is at least $2{n\choose2}-|T|$, where $T$ denotes the set of
touching points. If $|T|=o(n^2)$, we are done. If this is not the case, we can
apply Theorem~\ref{xmonk} and conclude that the total number of crossings is
$\Omega(n^2\sqrt{\log n/\log\log n})$. \qed

\section{Proof of Theorem~\ref{bibi}}\label{third}

We say that a point $p$ is {\em to the right} of point $q$ and $q$ is {\em to the left} of $p$ in the plane if the $x$-coordinate of $p$ is larger than the $x$-coordinate of $q$.

Consider the collections $S_1$ and $S_2$ of bi-infinite $x$-monotone curves, as in the statement of Theorem~\ref{bibi}, and let $S=S_1\cup S_2$. By slightly perturbing the curves in $S$, if necessary, eliminate all touchings between them, except those at which a {\em curve in $S_1$ touches a curve in $S_2$ from above}. This operation does not change the set of crossing points, and we may assume without loss of generality that the number of remaining touchings is at least $tn/2$. Indeed, otherwise we can swap the roles of $S_1$ and $S_2$ and the assumption will hold. By a slight abuse of notation, we continue to denote the collections of perturbed curves $S_1$ and $S_2$, and their union by $S$. In the sequel, we will use the property that each curve in $S$ is touched only from one side.

\smallskip

Next, we construct a weighted bipartite multigraph $G$ such that its vertex classes are $T$, the set of {\em touching} points and $X$, the set of {\em crossing} points between pairs of curves coming from the same collection $S_1$ or $S_2$. That is, $X$ does not contain crossings between
a curve in $S_1$ and a curve in $S_2$.

\smallskip

Next, we define some sets of weighted edges, $A_k$, $B_k$ and $C_k$, running between $T$ and $X$. The edge set of $G$ is defined as the disjoint union of the sets $A_k$, $B_k$ and $C_k$, for all $1\leq k\leq t/2$ that are powers of $2$. Every edge between a touching $p\in T$ and a crossing $q\in X$ has the property that $p$ and $q$ belong to the same curve in $S$ and $p$ lies to the left of $q$.
In case $p$ and $q$ are
connected in several of these sets, we consider these distinct
edges and $G$ contains several parallel edges connecting $p$ and $q$,
typically with different weights.

The definitions of $A_k,B_k$ and $C_k$ are illustrated in Figures \ref{Fig:ChargeAB} and \ref{Fig:ChargeC}.
Namely, let $p\in T$ be the touching between two curves, $a,b\in S$, and let $q\in X$ be a crossing to the right of $p$, where $a$ crosses $c\in S$. Here we must have $a,c\in S_1$ and $b\in S_2$, or alternately $a,c\in S_2$ and $b\in S_1$. Let $\alpha>1$ be a parameter to be specified later.

\smallskip

\noindent {\bf A.} All edges of $A_k$
have weight $1/k$. The edge connecting a touching $p$ to a crossing $q$ is
present in $A_k$ if there are fewer than $k$ touching points $p'$ on $a$
between $p$ and $q$; see Figure \ref{Fig:ChargeAB} (left). 

\medskip
\noindent {\bf B.} All edges of $B_k$ have weight $\alpha/k$. The edge connecting $p$ and $q$ is present in $B_k$ if the following conditions are satisfied (refer to Figure \ref{Fig:ChargeAB}
(right)):

{\bf B1.} the curves $b$ and $c$ touch to the right of $q$ and

{\bf B2.} there are fewer than $k/\alpha$ touching points $p'$ on $a$ between
$p$ and $q$ with the property that the curve $b'\in S\setminus\{a\}$
containing $p'$ touches $c$.

\smallskip

\begin{figure}[htbp]
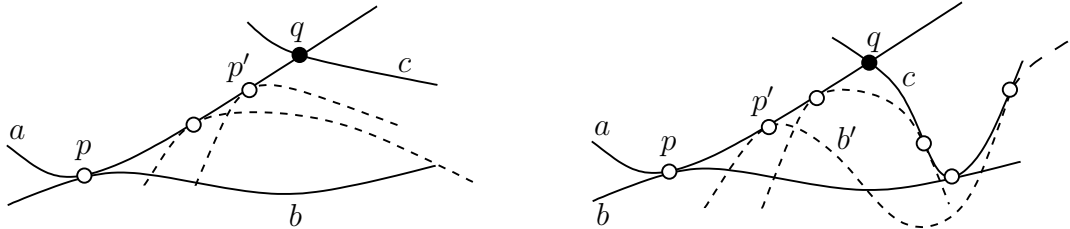

\begin{center}
\input{ChargeA.pstex_t}\hspace{1.5cm}\input{ChargeB.pstex_t}
\caption{\small The construction of $A_k$ and $B_k$ (resp., left and right). Each edge $(p,q)$ connects a touching $p\in T$ between $a$ and $b$ to a crossing $q\in X$ between $a$ and some third curve $c$.}
\label{Fig:ChargeAB}
\end{center}
\end{figure}

\begin{figure}[htbp]
\begin{center}
\input{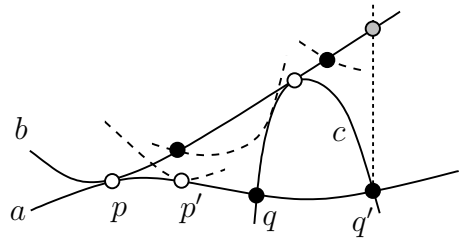}
\caption{\small The construction of $C_k$. The {\it arc} of $(p,q)\in C_k$ is the part of $b$ to the right of $p$ and left of $q'$.}
\label{Fig:ChargeC}
\end{center}
\end{figure}

\noindent {\bf C.} All edges of
$C_k$ have weight $\alpha^2/k$. The edge connecting $p$ and $q$ is present in $C_k$ if

{\bf C1.} the curves $a$ and $c$ cross to the right of $q$ with $q'$ being the next such crossing,

{\bf C2.} the curve $b$ touches $c$ between $q$ and $q'$,

{\bf C3.} there are fewer than $k$ touching points $p'$ on $a$ between $p$ and $q$, and

{\bf C4.} there are fewer than $\alpha k$ points of $X$ on $b$ to the right of
$p$ and to the left of $q'$.

We call the part of $b$ to the right of $p$ and left of $q'$ the {\em arc} of
the corresponding edge in $C_k$ (see Figure \ref{Fig:ChargeC}).

\smallskip

Our proof relies on a double counting of the total weight of the edges in $G$.
We prove an upper bound for the total weight of edges incident to crossing
points and a lower bound for the total weight of edges incident to most
touching points. This proof technique is commonly referred to as the
charging method.

\subsection{Upper bound on total weight}

We start with considering the edges incident to a fixed crossing point
$q\in X$ of two curves of $S$. Since $p$ can lie on either of these two curves, there are at most $2k$ edges $(p,q)\in A_k$ incident to $q$, for a
total weight of at most $2$. Summing over all possible values of $k$ we get a
total weight of at most $2\log t$.

Similarly, there are at most $2\lceil k/\alpha\rceil$ edges in $B_k$ incident
to $q$ for a total weight of at most $2+2\alpha/k$. Summing for all $k$ we get
that the total weight of these edges is less than $2\log t+4\alpha$.

For edges in $C_k$ incident to $q$, only one\footnote{This distinction depends on whether these two curves crossing at $q$ belong to $S_1$ or to $S_2$, and uses that the curves of $S_1$ are allowed to touch the curves of $S_2$ only {\it from above}.} of the two curves passing through
$q$ may play the role of $a$, the other curve must play the role of $c$.
Considering condition C3 in the definition of $C_k$ (and ignoring the arcs
of the edges for a moment), we see that $q$ is incident to at most $k$ edges of
$C_k$ for a total weight of at most $\alpha^2$.

\begin{proposition}\label{Prop:ArcsIntersect}
For any pair of edges $(p,q)\in C_k$ and $(p',q)\in C_{k'}$ incident to the same point $q\in X$,
their respective arcs either coincide or cross. In case of a crossing,
all the crossing points must belong to $X$.
\end{proposition}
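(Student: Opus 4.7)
My plan is to split on whether the two touching partners of $a$ at $p$ and $p'$ coincide. Using the preceding observation that at the common crossing $q$ exactly one of the two curves through $q$ may serve as $a$, both edges share the same $a$ (the curve through $p$ and $q$) and the same $c$ (the other curve through $q$), and consequently the same $q'$. Let $b$ (resp.\ $b'$) be the curve touching $a$ at $p$ (resp.\ $p'$); both lie in the class of $S$ opposite to that containing $a$ and $c$.

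If $b=b'$, I will invoke the uniqueness of the common point of two touching curves to conclude $p=p'$, and so the two arcs coincide as subsets of $b$. The substantive case is $b\neq b'$, where my goal is to force a sign change between $b$ and $b'$ on the strip $x\in(x(q),x(q'))$. Without loss of generality assume $a,c\in S_1$ and $b,b'\in S_2$; by C2, each of $b,b'$ touches $c$ at a point ($\tau_b$, resp.\ $\tau_{b'}$) whose abscissa lies in $(x(q),x(q'))$, and at such a touching $c$ lies above its partner locally. The critical step is the globalization: since $b$ and $c$ are bi-infinite $x$-monotone curves sharing only $\tau_b$, the continuous function $c-b$ vanishes only at $x(\tau_b)$ and is nonnegative there, so by the intermediate value theorem $c\geq b$ everywhere with equality iff $x=x(\tau_b)$; the analogous inequality holds for $b'$.

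With these global inequalities in hand, general position (no three curves through one point) gives $x(\tau_b)\neq x(\tau_{b'})$, and evaluating at these two abscissas yields $b>b'$ at one and $b<b'$ at the other. A second application of the intermediate value theorem then produces a crossing of $b$ and $b'$ at an abscissa strictly between $x(\tau_b)$ and $x(\tau_{b'})$, hence inside $(x(q),x(q'))$, and this abscissa automatically lies in both intervals $[x(p),x(q')]$ and $[x(p'),x(q')]$, so the crossing lies on both arcs. For the second assertion of the proposition: any intersection of the two arcs is an intersection of $b$ and $b'$; since $b,b'\in S_2$, the perturbation performed at the start of the proof of Theorem~\ref{bibi} prevents them from touching, so every such intersection is a proper crossing of same-class curves and therefore belongs to $X$ by definition.

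The main obstacle I anticipate is precisely this local-to-global upgrade: showing that a single touching of two bi-infinite curves in fact determines their pointwise order on all of $\mathbb{R}$. This is what converts the purely local information supplied by C2 into the global sign change that drives the intermediate value argument, and it is here that the bi-infinite $x$-monotone hypothesis of Theorem~\ref{bibi} becomes indispensable.
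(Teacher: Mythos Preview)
Your proof is correct and follows essentially the same approach as the paper's: both arguments identify the touchings $\tau_b,\tau_{b'}$ of $b,b'$ with $c$ (the paper calls them $p_1,p_2$) and observe that the vertical order of $b$ and $b'$ reverses between these two abscissas, forcing a crossing inside both arcs. Your write-up is more explicit than the paper's about why the single touching globalizes to $c\ge b$ on all of $\mathbb{R}$ (via the intermediate value theorem and the definition of touching as a unique common point), whereas the paper simply asserts the order reversal; otherwise the two arguments coincide.
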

\begin{proof}
Assume with no loss of generality that $q$ lies on $a\in S_2$, and let
$b,b'\in S_1$ be the respective second curves that are incident to $p$ and
$p'$. If $p=p'$, then the arcs of the two edges coincide. Otherwise we must have $b\neq b'$.
Let $p_1$ and $p_2$ be the respective points at which $b$ and $b'$ touch $c$ (as prescribed in condition C2).  The proposition now follows because the vertical lines through $p_1$ and $p_2$ intersect the curves $b$ and $b'$ in different order, each time within the respective arc of $(p,q)$ or $(p',q)$. See Figure \ref{Fig:ArcsIntersect}.
\end{proof}

\begin{figure}[htbp]
\begin{center}
\input{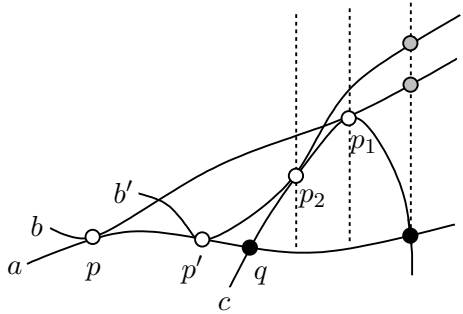}
\caption{\small Proof of Proposition \ref{Prop:ArcsIntersect}: The curves $b$ and $b'$ are tangent to $c$ at the respective points $p_1$ and $p_2$, so they must cross between the vertical lines through $p_1$ and $p_2$.}
\label{Fig:ArcsIntersect}
\end{center}
\end{figure}

Let $k_0$ be the smallest index such that an edge in $C_{k_0}$ is incident to
$q$. Consider the arc of any such edge. By condition C4, there are fewer than
$\alpha k_0$ crossing points on this arc that belong to $X$. The crossing
point with the arc of an edge in $C_k$ incident to $q$ determines the edge,
and so does the fact that the two arcs coincide. Therefore, there are at
most $\lceil\alpha k_0\rceil$ edges in $C_k$ incident to $q$. This means that
the total weight is at most $\alpha^3k_0/k+\alpha^2/k$. Summing over all
values of $k$  and always using the better of the two estimates proved, we
obtain that the total weight of all edges in the sets $C_k$ incident to $q$ is
less than $\alpha^2(\log\alpha+2)$.

The total weight of all edges in $G$ incident to a fixed vertex $q\in X$ is
therefore at most $4\log t+\alpha^2\log\alpha+6\alpha^2$. The total weight of
all edges in $G$ is at most $|X|(4\log t+\alpha^2\log\alpha+6\alpha^2)$.

\subsection{Lower bound on total weight}

Now we fix a touching point $p\in T$. Let $a$ and $b$ be the two curves in $S$
that touch at $p$ and assume that both $a$ and
$b$ contain at least $k$ touching points to the right of $p$. This assumption
holds for all but at most $2nk$ touching points.

Fix $1\leq k\leq nt/2$. Let the next $k$ touching points along $a$ to the right of $p$ be
$r_1,\dots,r_k$. Let $b_0$ be the segment of $b$ to the right of $p$ and to
the left of
$r_k$. We assume that there are fewer than $k$ touching points on $b_0$. We
can assume this without loss of generality, as if this condition is violated,
we can switch the roles of $a$ and $b$.

For $i=1,\dots,k$, let $c_i\in S$ be the curve different from $a$ containing
$r_i$. Observe that $c_i$ must cross $b_0$ to the left of $r_i$ at least
once; see Figure \ref{Fig:LowerBoundSetup}. We are using here that all touchings are on the same side of any
curve. Let $q_i$ be any one of these crossing points.

\begin{figure}[htbp]
\begin{center}
\input{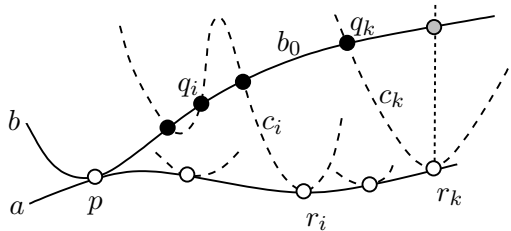}
\caption{\small Obtaining a lower bound on the total weight received by a touching $p$ between $a$ and $b$.
We consider the first $k$ touching points $r_i$, for $1\leq i\leq k$, along $a$ and to the right of $p$.  The other curve $c_i$ incident to $r_i$ must cross $b$ to the right of $p$ and to the left of $r_i$.}
\label{Fig:LowerBoundSetup}
\end{center}
\end{figure}

We claim that the total weight of the edges in $A_k$, $B_k$ and $C_k$
incident to $p$ is at least $\alpha$. We prove this claim by case
analysis.

{\bf Case 1.} $b_0$ contains at least $\alpha k$ points of $X$. Note that
all these points are connected to $p$ by an edge in $A_k$. These edges have a
total weight of at least $\alpha$, as claimed.

{\bf Case 2.} $q_i$ is connected to $p$ by an edge in $B_k$ for
$i=1,\dots,k$. Observe that these edges alone provide the total weight
$\alpha$, as claimed.

{\bf Case 3.} Neither Case 1, nor Case 2 applies. We fix an index $1\le
i\le k$ such that the crossing point $q_i$ is not connected to $p$ by an
edge in $B_k$. Note that condition B1 for this edge to be present in $B_k$ is
satisfied, thus condition B2 must be violated, i.e., there are at least
$k/\alpha$ distinct
touching points $p'$ on $b$ between $p$ and $q_i$ with the property that the
curve $d\in S\setminus\{b\}$ containing $p'$ touches $c_i$. Consider the
triangle-like region bounded by the segment of $a^*$ of $a$ between $p$ and
$r_i$, the segment $b^*$ of $b$ between $p$ and $q_i$ and the segment $c^*$ of
$c_i$ between $r_i$ and $q_i$. Close to $p'$, the curve $d$ is inside this
region, as depicted in Figure \ref{Fig:LowerBoundLense}. (We are using again
that the curves are touched from one side only.) As
$d$ touches both $b$ and $c_i$, it must leave the region through $a^*$. Let $q$
be the first point where  $d$ and $a^*$ cross to the left of $p'$, and let
$q'$ be the first point where they cross to the right of $p'$. Note that $q$ is
connected to $p$ by an edge in $C_k$. Indeed, conditions C1 and C2 are easy
to verify. Condition C3, holds as the segment of $a$ between $p$ and $q$ is
contained in $a^*$, which has exactly $i-1$ touching points in its interior.
Condition C4 holds, as the segment of $b$ to the right of $p$ and left
of $q'$ is
contained in $b_0$, which is assumed to contain fewer than $\alpha k$
points of $X$. Note that there were at least $k/\alpha$ distinct choices
for the touching point $p'$, each yielding a distinct edge in $C_k$ incident
to $p$, so the claim is also proved in this final case.

\begin{figure}[htbp]
\begin{center}
\input{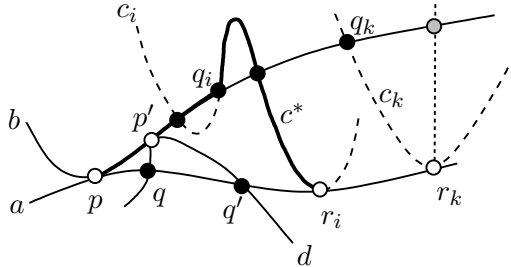}
\caption{\small Case 3. We encounter at least $k/\alpha$ curves $d$, each touching $b$ at a point $p'$ between $p$ and $q_i$.
For each $d$ of this kind, we set $q$ (resp., $q'$) to be the first crossing between $a$ and $d$ to the left (resp., right) of $p'$, and argue that the edge $(p,q)$ belongs to $C_k$.}
\label{Fig:LowerBoundLense}
\end{center}
\end{figure}

Summing for all the touching points in $T$, except the at most $2nk$
that are among the last $k$ on some curve, we get that the total weight of
the edges in $A_k$, $B_k$ and $C_k$ ($k$ fixed) is at least
$\alpha(|T|-2nk)$. Using that $|T|\ge tn/2$ and summing over all $k$, we
obtain the that the total weight of all edges in $G$ is at least $(\log
t-3)\alpha tn/2$. Comparing this with the upper bound obtained earlier, we get
$$|X|\ge{(\log t-3)\alpha tn/2\over4\log t+\alpha^2\log\alpha+6\alpha^2}.$$
Substituting $\alpha=\sqrt{\log t/\log\log t}$ finishes the proof of
Theorem~\ref{bibi}.

\section{Proof of Theorem~\ref{bipartite}}\label{fifth}

This proof is also based on the charging method, but we have to modify the
charging scheme used in the proof of Theorem~\ref{bibi}. We build the same
type of weighted bipartite graph between the vertex sets $X$ and $T$, where
$X=X_1\cup X_2$ and $X_i$ is the set of intersection points between curves in
$S_i$, while $T$ is the set of points where a curve of $S_1$ touches a curve
of $S_2$.

Note that each curve in $S=S_1\cup S_2$ is touched from one side only. Indeed,
a curve touching a closed curve from one side cannot intersect a curve
touching the same closed curve from the other side. We orient each curve $a\in
S$ so that the curves touching it lie on its left. This
orientation specifies which of the two arcs of a closed curve $a$
connecting two distinct points $p,q\in a$ is considered {\em the arc of $a$
from $p$ to $q$}. We consider this arc relatively open, thus $p$ and $q$ do
not belong to the arc connecting them.

We define the sets of weighted edges, $A_k$, $A'_k$, $A''_k$, $B_k$ and
$C_k$, running between $T$ and $X$, whose definition is detailed below and illustrated in Figures \ref{Fig:ChargeAGeneral}, \ref{Fig:ChargeBGeneral} and \ref{Fig:ChargeCGeneral}. The edge set of $G$ is defined as the
disjoint union of the sets $A_k$, $A'_k$, $A''_k$, $B_k$ and $C_k$, for all $1\leq k<n$ that
are powers of $2$. Thus we have $l:=\lceil\log n\rceil$ distinct values of $k$
to consider. The parameter $\alpha>1$ plays the same role as in the proof of
Theorem~\ref{bibi} and will be set later.

\begin{figure}[htbp]
\begin{center}
\input{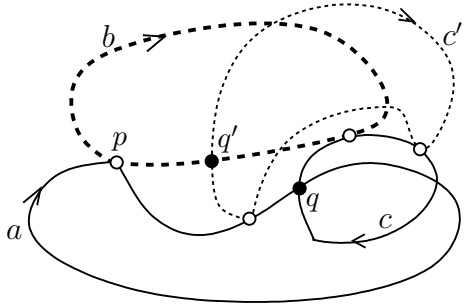}%\hspace{1.5cm}\input{ChargeAGeneral1.pstex_t}
\caption{\small The construction of $A_k, A'_k$ and $A''_k$. A touching $p\in T$ between $a\in S_1$ and $b\in S_2$ is connected in $A_k$ to a crossing $q\in X_1$ between $a$ and a third curve $c\in S_1$ if and only if the arc of $a$ from $p$ to $q$ contains fewer than $k$ points of $T$. Each edge $(p,q')\in A'_k\cup A''_k$ connects a touching $p\in T$ between $a\in S_1$ and $b\in S_2$ to a crossing $q'\in X_2$ between $b$ and some third curve $c'\in S_2$.}
\label{Fig:ChargeAGeneral}
\end{center}
\end{figure}

\medskip
\noindent {\bf A.} The edge connecting a touching $p\in T$ to a crossing $q\in
X_1$ is present in $A_k$ with weight $1/k$ if $p$ and $q$ lie on a common
curve $a\in S_1$ and the arc of $a$ from $p$ to $q$ contains fewer than $k$
touching points.

\medskip
\noindent {\bf A'.} The edge connecting $p\in T$ and $q\in X_2$ is present in
$A'_k$ with weight $\alpha/k$ if $p$ and $q$ lie on a common curve $b\in
S_2$ and the arc of $b$ from $q$ to $p$ contains fewer than $k/\alpha$
touching points.

\medskip
\noindent {\bf A''.} The edge connecting $p\in T$ and $q\in X_2$ is present in
$A''_k$ with weight $1/(\alpha k)$ if $p$ and $q$ lie on a common curve $b\in
S_2$ and the arc of $b$ from $q$ to $p$ contains fewer than $\alpha k$
touching points.

Note that if $\alpha^2$ is a
power of $2$, then $A''_k$ and $A'_{\alpha^2k}$ are identical.

\begin{figure}[htbp]
\begin{center}
\input{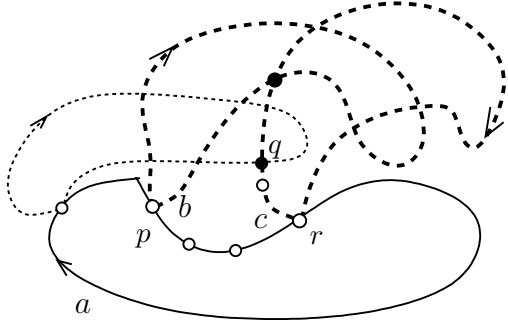}
\caption{\small The construction of $B_k$. A touching $p\in T$ between $a\in S_1$ and $b\in S_2$ is connected to a crossing $q\in X_2$ if
there is a touching $r$ between $a$ and some $c\in S_2$ so that the arc of $a$ from $p$ to $r$ contains fewer than $k$ points of $T$ and the arc of $c$ from $r$ to $q$ contains fewer than $\alpha k$ points of $T$.}
\label{Fig:ChargeBGeneral}
\end{center}
\end{figure}

\medskip
\noindent {\bf B.} The edge connecting $p\in T$ and $q\in X_2$ is present in
$B_k$ with weight $1/(\alpha k^2)$ if there is a touching $r\in T$ between
some curves $a\in S_1$ and $c\in S_2$ satisfying that

{\bf B1.} $p$ is on $a$ and the arc of $a$ from $p$ to $r$ contains fewer than
$k$ touching points and

{\bf B2.} $q$ is on $c$ and the arc of $c$ from $r$ to $q$ contains fewer than
$\alpha k$ touching points.

\medskip
\noindent {\bf C.} Let $q$ be an intersection point of two curves $a$ and
$d$ belonging to $S_1$. Let $q'$ be the intersection of $a$ and $d$ coming
right after $q$ along $d$. That is, the arc of $d$ from $q$ to $q'$ is
disjoint from $a$ (see Figure \ref{Fig:ChargeCGeneral}). The edge connecting
$p$ and $q$ is present in $C_k$ with weight $\alpha^2/k$ if

{\bf C1.} $p$ lies on $a$ and the arc of $a$ from
$p$ to $q$ contains fewer than $k$ points of $T$,

{\bf C2.} the arc of $d$ from $q$ to $q'$ contains fewer than $3\alpha^2k$ points
of $T$ and

{\bf C3.} the curve $b\in S_2$ through $p$ touches $d$ within the arc of $d$
from $q$ to $q'$.

\begin{figure}[htbp]
\begin{center}
\input{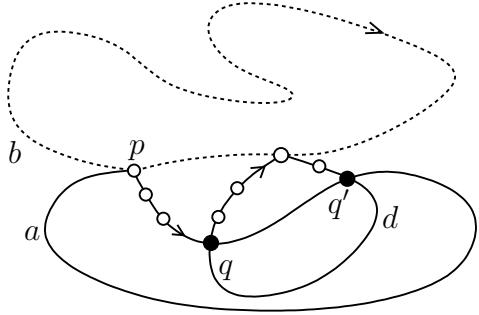}
\caption{\small The construction of $C_k$. A touching $p\in T$ between $a\in S_1$ and $b\in S_2$ is connected to a crossing $q\in X_1$ between $a$ and some other curve $d\in S_1$ if the arc of $a$ from $p$ to $q$ contains fewer than $k$ points of $T$, the arc of $d$ between $q$ and the next crossing $q'$ of $a$ and $d$ contains less than $3\alpha^2k$ points of $T$, and $b$ touches $d$ within its arc from $q$ to $q'$.}
\label{Fig:ChargeCGeneral}
\end{center}
\end{figure}

\medskip
In the case $p$ and $q$ are
connected in several of these sets, we consider the corresponding edges
distinct and $G$ contains several parallel edges connecting $p$ and $q$,
possibly with different weights.

\subsection{Upper bound on total weight}

We start by considering the edges incident to a fixed crossing point
$q\in X$.

A vertex $q\in X_1$ is incident to at most $2k$ edges in $A_k$ for a total weight of at most $2$. Summing
over all possible values of $k$, we get a total weight of at most $2l$.

A vertex $q\in X_2$ is incident to at most $2\lceil k/\alpha\rceil$ edges in
$A'_k$ and at most $2\lceil\alpha k\rceil$ edges of $A''_k$ for a total weight
of less than $4+4\alpha/k$. Summing over all $k$, we obtain that
the total weight of these edges is less than $4l+8\alpha$.

In a similar way, given a vertex $q\in X_2$, we can choose $r\in T$ to satisfy
condition B2 in $2\lceil\alpha k\rceil$ ways and each such vertex $r$ brings
about $k$ edges of $B_k$ incident to $q$ for a total of at most $2k\lceil
\alpha k\rceil$ edges with a total wight less than $2+2/(\alpha k)$. Summing
over all $k$, we obtain that the total weight of these edges is less than
$2l+4/\alpha$.

Finally, we consider the edges in $C_k$. Let us fix $q\in X_1$. This point
lies on two curves in $S_1$. To produce an edge in $C_k$ incident to $q$, one
must play the role of $a$ and the other the role of $d$. By condition C3, a point
of the arc of $d$ from $q$ to $q'$ is shared by a curve in $S_2$, so it must
be on the left side of $a$. This also holds for the entire arc, as it is disjoint
from $a$. This property uniquely determines which of the two curves
through $q$ must play the role of $a$ and which one must play the role of $d$. Thus, $q'$ is
also determined by $q$ and is independent of $k$ and of the particular edge in
$C_k$ incident to $q$.

Condition C1 guarantees that the number of edges in $C_k$ incident to $q$ is at
most $k$, for a total weight of at most $\alpha^2$.

Let $k_0$ be the smallest index with an edge of $C_{k_0}$ incident to a given
vertex $q\in X_1$. By condition C2, the arc of $d$ from $q$ to $q'$ contains at
most $3\alpha^2k_0$ points of $T$. If $p$ is adjacent to $q$ in $C_k$ for some
$k$, then the curve $b\in S_2$ through $p$ must touch $d$ in its arc from $q$
to $q'$ and this touching point determines $p$. Thus, the number of edges in
$C_k$ ($k$ fixed) incident to $q$ is at most $3\alpha^2k_0$ for a total weight of $3\alpha^4k_0/k$.

Summing over all values of $k\ge k_0$  and always using
the better of the two estimates proved we get the total weight of all edges
in the sets $C_k$ incident to $q$ is less than $2\alpha^2\log\alpha+4\alpha^2$.

The total weight of all edges in $G$ incident to a fixed vertex $q\in X$ is
therefore at most $6l+2\alpha^2\log\alpha+12\alpha^2$. The
total weight of all edges in $G$ is at most $|X|$ times this value.

\subsection{Lower bound on total weight}

Fix a touching point $p\in T$. Let $a\in S_1$ and $b\in S_2$ be the two
curves in $S$ that touch at $p$.

Let us fix $k$, and let the $k$ touching points that follow $p$ along $a$ (in the chosen
orientation) be denoted by $r_1,\dots,r_k$ (with $r_1$ being closest). Note
that $a$ contains exactly $n$ points in $T$, so $p$ and $r_1,\dots,r_k$ are
all distinct.

\begin{figure}[htbp]
\begin{center}
\input{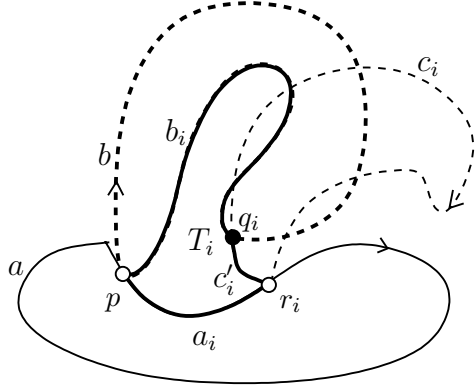}
\caption{\small Lower bound for the proof of Theorem \ref{bipartite}: the overall setup. $q_i$ is the first intersection point of $c_i$ and $b$ after $r_i$ along $c_i$. $T_i$ is the region whose boundary is composed of the arc $b_i$ of $b$ from $q_i$ to $p$, the arc $c'_i$ of $c_i$ from $r_i$ to $q_i$, and the arc $a_i$ of $a$ from $p$ to $r_i$.}
\label{Fig:LowerBoundGeneral}
\end{center}
\end{figure}

For every $i=1,\dots,k$, let $c_i\in S_2$ be the curve containing
$r_i$. Let $q_i$ be the first intersection of the curves $b$ and $c_i$ after
$r_i$, with respect to the orientation of $c_i$. Consider the closed curve
consisting of the following three arcs: the arc $b_i$ of $b$ from $q_i$ to
$p$, the arc $c'_i$ of $c_i$ from $r_i$ to $q_i$, and the arc $a_i$ of $a$
from $p$ to $r_i$. This (together with the points $p$, $q_i$ and $r_i$) is a
simple closed curve and, hence, partitions the plane into two connected
components; see Figure \ref{Fig:LowerBoundGeneral}. We denote by $T_i$ the component that lies on the left side of the
three arcs. Note that the curves $a$ and $b$ are disjoint from $T_i$, but $c_i$
may enter $T_i$ through the arc $b_i$ several times.

We claim that the total weight of the edges in $A_k$, $A'_k$, $A''_k$, $B_k$
and $C_k$ incident to $p$ is at least $\alpha$. We prove this claim by case
analysis.

\smallskip

{\bf Case 1.} The arc $a_k$ contains at least $\alpha k$
points of $X$. Note that all these points are connected to $p$ by an edge in
$A_k$. These edges have a total weight of at least $\alpha$, as claimed.

Note that any curve in $S_1$ touching $b$ in $b_i$ or touching $c_i$ in $c'_i$
must be inside $T_i$ in a small neighborhood of this touching point. But as it
intersects $a$ it cannot be entirely within $T_i$, so it must cross the
boundary of $T_i$. In particular it must cross $a_i$, as it can only touch $b$
and $c_i$ (due to the complete bipartite touching structure of $S_1\times S_2$). See Figure \ref{Fig:LowerBoundGeneralCases} (left). The curve $a_i$ is contained in $a_k$. Thus, if Case 1 does not hold,
$b_i$ and $c_i'$ contain fewer than $\alpha k$ points from $T$, for any $1\le i\le
k$.

\begin{figure}[htbp]
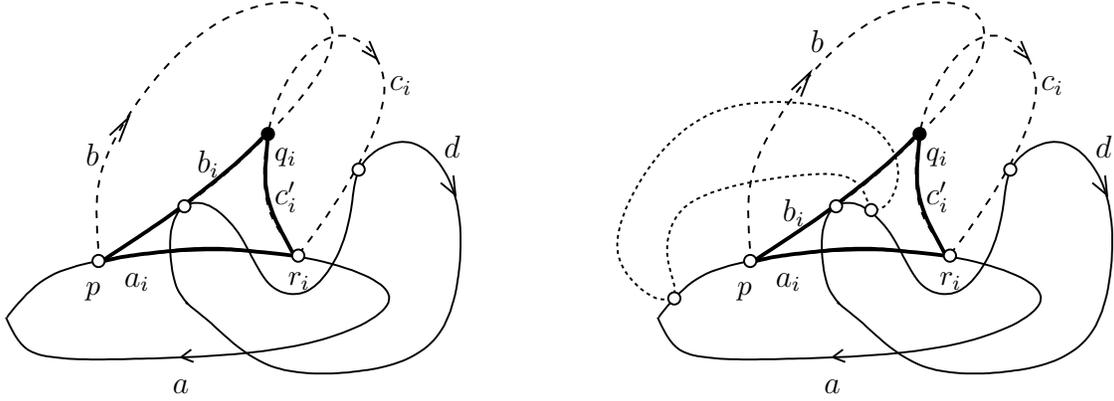

\begin{center}
\input{LowerBoundGeneral1.pstex_t}\hspace{2cm}\input{LowerBoundGeneral5.pstex_t}
\caption{\small Lower bound for the proof of Theorem \ref{bipartite}. Left: Any curve in $S_1$ that touches $b$ or $c_i$ within the respective arcs $b_i$ or $c'_i$ must cross $a$ within $a_i$. Therefore, if Case 1 does not occur, the overall number of such curves cannot exceed $\alpha k$. Right: In Case 4, the curve $b$ is touched within $b_i$ by at least $k/\alpha$ curves $d$ of $S_1$. Notice that any curve of $S_2$ that touches $d$ within $T_i$ must enter that region through $b_i$ or $c'_i$. Since none of the Cases 1--3 holds, the overall number of such curves cannot exceed $3\alpha^2 k$.}
\label{Fig:LowerBoundGeneralCases}
\end{center}
\end{figure}

\smallskip
{\bf Case 2.} Case 1 does not hold and $b_i$ contains at least $\alpha^2k$ points
from $X$, for some $i$. As $b_i$ has fewer than $\alpha k$ points of $T$, every
point of $b_i$ that belongs to $X$ is connected to $p$ by an edge in $A''_k$, providing
a total weight $\alpha$, as required.

\smallskip
{\bf Case 3.} Case 1 does not hold and for every $i=1,\ldots,k$, either
$c'_i$ contains at least $2\alpha^2k$ points of $X$ or $b_i$ contains fewer
than $k/\alpha$ points of $T$. Note that $a_i$ contains $i-1<k$ points of $T$,
while $c'_i$ contains fewer than $\alpha k$ points of $T$. Hence, every
crossing point on $c'_i$ is connected to $p$ by an edge in $B_k$. This
represents a total weight of at least $2\alpha/k$ if $|c'_i\cap
X|\ge2\alpha^2k$. For values of $i$ with $|b_i\cap T|<k/\alpha$, the vertex
$q_i$ is adjacent to $p$ in $A'_k$, and this edge alone has weight
$\alpha/k$. If either possibility happens for all $k$ possible values of $i$,
these weights add up, except that some edges in $B_k$ may be counted
twice. This results in a total weight of at least $\alpha$ for the edges in
$A'_k$ and $B_k$ incident to $p$.

\smallskip
{\bf Case 4.} None of the Cases 1, 2 or 3 hold. We fix an index $1\le
i\le k$ such that $c'_i$ contains fewer than $2\alpha^2k$ points of $X$ and $b_i$
contains at least $k/\alpha$ points of $T$. This is possible, as Case 3 does
not hold. Let us consider one of the curves
in $S_1$ touching $b$ at a point $z$ of $b_i$. As was pointed out earlier, this curve
$d$ lies inside $T_i$ in a small neighborhood of $z$ and leaves $T_i$ by
crossing $a_i$. Let $q$ be the last crossing between $d$ and $a$ before $z$,
and let $q'$  be the first crossing between them after $z$, with respect to the
orientation of $d$. See Figure \ref{Fig:LowerBoundGeneralCases} (right) for an illustration.

We claim that $q$ is adjacent to $p$ in $C_k$. Indeed, conditions C1 and C3
hold trivially. To verify C2, consider a curve $e\in S_2$ touching $d$ at a point
in the arc of $d$ from $q$ to $q'$. This touching point (as the whole arc of
$d$ from
$q$ to $q'$) lies inside $T_i$. However, $e$ must intersect $b$, so it must leave
$T_i$. It touches $a$, so it must leave through either $b_i$ or $c'_i$. Here
$c'_i$ has fewer than $2\alpha^2k$ points in $X$, by our choice of $i$,
while $b_i$ contains fewer than $\alpha^2k$ points in $X$, since Case 2
does not hold. Hence, $e$ has fewer than $3\alpha^2k$ points where it can leave
$T_i$, and there are at most $3\alpha^2k$ possible choices for the curve
$e$. This means that condition C2 is satisfied and $q$ is adjacent to $p$ in
$C_k$.

By our choice of $i$, we can select the curve $d$ in at least $k/\alpha$ different ways, each giving rise to a different edge in $C_k$ incident to $p$. The
total weight of these edges is $\alpha$. This completes the analysis of the last case, showing that the total weight of all edges in $A_k$, $A'_k$,
$A''_k$, $B_k$ and $C_k$ incident to $p$ is at least $\alpha$.
\smallskip

Summing over all $l$ possible values of $k$ and over all
$n^2$ touching points in $T$, we conclude that the total weight of $G$ is at least
$\alpha\lceil\log n\rceil n^2$. Comparing this lower bound with the upper
bound proved in the preceding subsection and substituting $\alpha=\sqrt{\log
n/\log\log n}$, Theorem~\ref{bipartite} follows.


\begin{thebibliography}{}
\itemsep -2pt
%\small

\bibitem[ANJPPSS04]{PseudoCircles} P. K. Agarwal, E. Nevo, J. Pach, R. Pinchasi, M. Sharir, and S. Smorodinsky, Lenses in arrangements of pseudocircles and their applications, {\it J. ACM} {\bf 51} (2004), 139--186.

\bibitem[AgS05]{AgS05} P. K.  Agarwal and M. Sharir, Pseudo-line arrangements: duality, algorithms, and applications, {\em  SIAM J. Comput.} {\bf 34} (2005), no. 3, 526--552.

\bibitem[ArS02]{ArS02}  B. Aronov and M. Sharir, Cutting circles into pseudo-segments and improved bounds for incidences, {\em Discrete Comput. Geom.} {\bf 28} (2002), no. 4, 475--490.

\bibitem[BMP05]{BMP05}  P. Brass, W. Moser, and J. Pach, {\em Research Problems in Discrete Geometry}, Springer-Verlag, New York, 2005.

\bibitem[Ch1]{Chan1} T. M. Chan, On levels in arrangements of curves, {\em Discrete Comput. Geom.} {\bf 29} (2003), 375--393. (Also in {\it Proc. 41th IEEE Sympos. Found. Comput. Sci. (FOCS)}, 2000, pp. 219--227.)

\bibitem[Ch2]{Chan2} T. M. Chan, On levels in arrangements of curves, II: a simple inequality and its consequence, {\it Discrete Comput. Geom.} {\bf 34} (2005), 11--24. (Also in {\it Proc. 44th IEEE Sympos.  Found. Comput. Sci. (FOCS)}, 2003, pp. 544--550.)

\bibitem[Ch3]{Chan3} T. M. Chan, On levels in arrangements of curves, III: further improvements, {\it Proc. 24th ACM Symposium on Computational Geometry (SoCG)}, 2008, pp. 85--93.

\bibitem[Ed87]{Ed87} H. Edelsbrunner, {\em Algorithms in Combinatorial Geometry. EATCS Monographs on Theoretical Computer Science, 10}, Springer-Verlag, Berlin, 1987.

\bibitem[Er46]{Er46} P. Erd\H os, On sets of distances of $n$ points, {\em Amer. Math. Monthly} {\bf 53} (1946), 248--250.

\bibitem[FFPP10]{FFPP10}  J. Fox, F. Frati, J. Pach, and R. Pinchasi, Crossings between curves with many tangencies, in: {\em WALCOM: Algorithms and Computation, Lecture Notes in Comput. Sci.} {\bf 5942}, Springer-Verlag, Berlin, 2010, 1--8. Also in: {\em An Irregular Mind, Bolyai Soc. Math. Stud.} {\bf 21}, J\'anos Bolyai Math. Soc., Budapest, 2010, 251--260.

\bibitem[HoH66]{HoH66}  J. M. Holtzman and H. Halkin, Directional convexity and the maximum principle for discrete systems, {\em SIAM J. Control} {\bf 4} (1966) 263--275.

\bibitem[KaMS12]{KaMS12}  H. Kaplan, J. Matou\v sek, and M. Sharir, Simple proofs of classical theorems in discrete geometry via the Guth--Katz polynomial partitioning technique, {\em Discrete Comput. Geom.} {\bf 48} (2012), no. 3, 499--517.

\bibitem[KLPS86]{KLPS86} K. Kedem, R. Livn\'{e}, J. Pach, and M. Sharir, On the union of Jordan regions and collision-free translational motion amidst polygonal obstacles, {\it Discrete Comput. Geom.} {\bf 1 (1)} 1986, 59–-71.

\bibitem[MaT06]{MaT06}  A. Marcus and G. Tardos, Intersection reverse sequences and geometric applications, {\em J. Combin. Theory Ser. A} {\bf 113} (2006), no. 4, 675--691.

\bibitem[Mu02]{Mu02}  D.  Mubayi, Intersecting curves in the plane, {\em Graphs Combin.} {\bf 18} (2002), no. 3, 583--589.

\bibitem[PaS09]{PaS09} J. Pach and M. Sharir, {\em Combinatorial Geometry and its Algorithmic Applications. The Alcal\'a lectures. Mathematical Surveys and Monographs, 152}, American Mathematical Society, Providence, RI, 2009.

\bibitem[RiT95]{RiT95}  R. B. Richter and C. Thomassen, Intersections of curve systems and the crossing number of $C_5\times C_5$, {\em Discrete Comput. Geom.} {\bf 13} (1995), no. 2, 149--159.

\bibitem[Sa99]{Sa99}  G. Salazar, On the intersections of systems of curves, {\em J. Combin. Theory Ser. B} {\bf 75} (1999), no. 1, 56--60.

\bibitem[ShA95]{ShA95}  M. Sharir and P. K. Agarwal, {\em Davenport-Schinzel Sequences and Their Geometric Applications}, Cambridge University Press, Cambridge, 1995.

\bibitem[SoT12]{SoT12}  J.  Solymosi and T. Tao, An incidence theorem in higher dimensions, {\em Discrete Comput. Geom.} {\bf 48} (2012), no. 2, 255--280.

\bibitem[Sz97]{Sz97} L. A. Sz\'ekely, Crossing numbers and hard Erd\H os problems in discrete geometry, {\it Combin. Probab. Comput.} {\bf 6} (1997), no. 3, 353--358.

\bibitem[SzT83a]{SzT83a} E. Szemer\'edi and W. T. Trotter, Jr., Extremal problems in discrete geometry, {\em Combinatorica} {\bf 3} (1983), no. 3-4, 381--392.

\bibitem[SzT83b]{SzT83b} E. Szemer\'edi and W. T. Trotter, Jr., A combinatorial distinction between the Euclidean and projective planes, {\em European J. Combin.} {\bf 4} (1983), no. 4, 385--394.

\bibitem[TT98]{PseudoParabolas} H. Tamaki, T. Tokuyama, How to cut pseudoparabolas into segments, Discrete Comput. Geom. {\bf 19 (2)} (1998), 265--290.

\bibitem[TV10]{TaV10}  T. Tao and V. H. Vu, {\em Additive Combinatorics. Cambridge Studies in Advanced Mathematics, 105}, Cambridge University Press, Cambridge, 2010.

\end{thebibliography}
\end{document}